\DeclarePairedDelimiter\floor{\lfloor}{\rfloor}
\def\Ddots{\mathinner{\mkern1mu\raise\p@
\vbox{\kern7\p@\hbox{.}}\mkern2mu
\raise4\p@\hbox{.}\mkern2mu\raise7\p@\hbox{.}\mkern1mu}}
\DeclarePairedDelimiterX{\norm}[1]{\lVert}{\rVert}{#1}
\newtheorem{theorem}{Theorem}[section]
\begin{document}
\setcounter{page}{1}

\begin{center}
{\LARGE \bf Convex H\"older bound and its applications}
\vspace{2.5mm}
\\ Hariprasad M\\
hariprasadm@iisc.ac.in \\
IISc Bengaluru, India -560012

\end{center}
\vspace{2.5mm}
\begin{abstract}
	Given $l,m,s \in [1, \infty]$ such that $l < s < m$ an upper bound on the $s$ norm is given using $l$ norm and $m$ norm. The result is applied in bounding odd values of zeta function, binomial sums and gamma and beta functions. 
\end{abstract}
{\small\textbf{MSC 2020}}: 26D15 (primary), 46A32, 11M06
\noindent
\section{Introduction}
H\"older's inequality is widely applied in differential equations, analysis, statistics etc. It has been extended from the stability perspective \cite{7}, refined and sharpened \cite{3} \cite{4}, in continuous form \cite{5} and in several dimensions \cite{6}.  Here we give the convex H\"older bound, with $l,m,s \in [1, \infty]$ such that $l < s < m$ give an upper bound on the $s$ norm in terms of $l$ norm and $m$ norm. 

Prof. Dinu Teodorescu posed the following question on the research gate :  Given $a,b,c >0$ with $a+b+c = 1$, to prove $(ab)^{5/4} + (bc)^{5/4}+ (ca)^{5/4} \leq \frac{1}{4}$.  \\
There are a few ways to prove this inequality, like using elementary AM-GM inequalities, using the method of Lagrange multipliers etc. But proving this inequality by repeated applications of H\"older's inequality (\cite{1} chapter 7 p.139) motivated us to give bound on $s$ norm given $l$ and $m$ norms when $l< s < m$ ($l,s,m \in [1,\infty]$).  \\

We discuss some applications of these results on bounding odd integer values of zeta function, evaluating binomial sums, and bounding beta and gamma functions.   

\section{Results}
For $a+b+c = 1$,
we have 
\begin{align}
(a+b+c)^2 &= a^2 + b^2 +c ^2 + 2 (ab+ bc + ca), \\
ab+bc+ca & \leq 1 - \min \{a^2 + b^2 + c^2 \}, \\
ab + bc + ca & \leq \frac{1}{2}(1 - \frac{1}{3}) = \frac{1}{3}.
\end{align}

\textbf{H\"older's inequality} (\cite{1} chapter 7 p.139) Let $f,g$ be the functions on a measure space, then $\norm{fg}_1 \leq \norm{f}_p \norm{g}_q$ with $\frac{1}{p} + \frac{1}{q} = 1$ and  $p \in [1, \infty]$.
Here we use the counting measure, i.e. the discrete case of H\"olders inequality. 
\subsection{Dinu's inequality}

We repeatedly apply H\"olders inequality on  $(ab)^{5/4} + (bc)^{5/4}+ (ca)^{5/4}$ with different choices of $f_i,g_i$ and $p$. \\
Step1 : $f_1 = \begin{bmatrix}ab &bc& ca \end{bmatrix}$ and 
$g_1 = \begin{bmatrix} ab^{1/4}& bc^{1/4} & ca^{1/4} \end{bmatrix}$, we choose $q = 4$, which leads $p = \frac{4}{3}$.
This gives inequality
\begin{align}
\norm{f_1g_1} & \leq \norm{f_1}_{4/3} \norm{g_1}_4, \\
& \leq \norm{f_1}_{4/3} \left(\frac{1}{3} \right)^{1/4}. 
\end{align}  
Step 2 : Now we chose $f_2 = \begin{bmatrix}ab &bc& ca \end{bmatrix}$ and $g_2 = \begin{bmatrix} ab^{1/3}& bc^{1/3} & ca^{1/3} \end{bmatrix}$ such that $(\norm{f_2g_2}_1)^{3/4} = \norm{f_1}_{4/3}$.
Further we have $q = 3$ which yields $p = 3/2$.
\begin{align}
\norm{f_2g_2}_1 \leq \norm{f_2}_{3/2} \left( \frac{1}{3} \right)^{1/3}.   
\end{align}

Step 3 : Now we chose  $f_3 = \begin{bmatrix}ab &bc& ca \end{bmatrix}$ and $g_3 = \begin{bmatrix} ab^{1/2}& bc^{1/2} & ca^{1/2} \end{bmatrix}$ such that $(\norm{f_3g_3}_1)^{2/3} = \norm{f_2}_{3/2}$.
We chose $q = 2$ in this case which leads us $p = 2$.
\begin{align}
\norm{f_3g_3}_1 \leq \norm{f_3}_{2} \left( \frac{1}{3} \right)^{1/2}.   
\end{align}

Step 4 : Now we chose  $f_4 = \begin{bmatrix}ab &bc& ca \end{bmatrix}$ and $g_4 = \begin{bmatrix} ab& bc& ca \end{bmatrix}$ and we chose $q = 1$, this leads $p = \infty$.
This gives,
\begin{align}
\norm{f_4g_4}_1 \leq \frac{1}{3} \frac{1}{4}.
\end{align}

Now doing back substitutions,
\begin{align}
\norm{f_3g_3}_1 &\leq \left( \frac{1}{3} \frac{1}{4} \right)^{1/2} \left( \frac{1}{3} \right)^{1/2} \\
 &= \frac{1}{3}  \left(\frac{1}{4} \right)^{1/2}
\end{align}
This gives, 
\begin{align}
\norm{f_2g_2}_1 &\leq  \left(\frac{1}{3}  \left(\frac{1}{4} \right)^{1/2} \right)^{2/3} \left( \frac{1}{3} \right)^{1/3}. \\
& = \frac{1}{3} \left(\frac{1}{4} \right)^{1/3}. 
\end{align}
Further back substitution gives,

\begin{align}
\norm{f_1g_1}_1 &\leq \left(\frac{1}{3} \left(\frac{1}{4} \right)^{1/3} \right)^{3/4}\left(\frac{1}{3} \right)^{1/4} \\
&= \frac{1}{3} \left(\frac{1}{4} \right)^{1/4} = 0.23570 < 1/4.
\end{align}

In general if 
$\sum_{k=1}^na_k = 1$ and $a_k > 0$ Then 
\begin{align*}
{\displaystyle \prod\limits_{i=1}^{n}a_i^{1+\frac{1}{m}}\sum_{k = 1}^n \frac{1}{a_k}^{1+\frac{1}{m}} \leq \frac{n-1}{2n} \left(\frac{1}{4}^{\frac{1}{m}}\right)}.
\end{align*} 

This recursive splitting and back substitution can be looked at as the following theorem.
\subsection{Convex H\"older bound}
\begin{theorem}
Given $l < s < m$ and given $l$ norm and $m$ norm of a function $f$ in a measure space, we have 
$(\norm{f}_s)^s  \leq (\norm{f}_l)^{l\frac{(m-s)}{m-l}} (\norm{f}_m)^{m\frac{s-l}{m-l}}$.
\end{theorem}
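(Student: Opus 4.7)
The plan is to recognize this as the classical log-convexity (interpolation) inequality for $L^p$ norms, which follows from a single well-chosen application of Hölder's inequality. The key idea is to split the integrand $|f|^s$ as a product $|f|^{\alpha}\cdot|f|^{\beta}$ with $\alpha+\beta=s$, and then pick Hölder conjugate exponents $p,q$ so that after applying Hölder the two factors yield exactly the $l$-norm and $m$-norm of $f$.

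Concretely, I would seek $\alpha,\beta,p,q$ satisfying the four constraints $\alpha+\beta=s$, $1/p+1/q=1$, $\alpha p=l$, and $\beta q=m$. Substituting $\alpha=l/p$ and $\beta=m/q$ into $\alpha+\beta=s$ together with the conjugacy relation leads to a $2\times 2$ linear system whose unique solution is
\begin{equation*}
\frac{1}{p}=\frac{m-s}{m-l},\qquad \frac{1}{q}=\frac{s-l}{m-l}.
\end{equation*}
Because $l<s<m$, both $1/p$ and $1/q$ are strictly positive and sum to $1$, so $p,q\in(1,\infty)$ are legitimate Hölder exponents. Then Hölder's inequality gives
\begin{equation*}
\norm{f}_s^{\,s}=\int |f|^{l/p}\,|f|^{m/q}\,d\mu\;\le\;\Bigl(\int|f|^l\,d\mu\Bigr)^{\!1/p}\Bigl(\int|f|^m\,d\mu\Bigr)^{\!1/q}=\norm{f}_l^{\,l(m-s)/(m-l)}\,\norm{f}_m^{\,m(s-l)/(m-l)},
\end{equation*}
which is exactly the claimed bound.

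The only subtle point, and the one I would treat with a little care, is the endpoint $m=\infty$ (and symmetrically one could imagine $l=1$ being delicate, but it is not). When $m=\infty$ the exponents $p,q$ above degenerate to $p=1$, $q=\infty$, so one cannot literally apply Hölder with these values in the same way; instead I would argue directly from $|f|^s\le\norm{f}_\infty^{\,s-l}\,|f|^l$ a.e., integrate, and check that the resulting estimate $\norm{f}_s^{\,s}\le\norm{f}_l^{\,l}\,\norm{f}_\infty^{\,s-l}$ matches the formula under the natural convention $m(s-l)/(m-l)\to s-l$ and $l(m-s)/(m-l)\to l$ as $m\to\infty$. With the finite-$m$ case settled by the single Hölder step above and the endpoint handled separately, the theorem is complete; no step beyond solving the linear system for $p$ and $q$ should offer any real obstacle.
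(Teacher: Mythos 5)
Your proof is correct and follows essentially the same route as the paper: split $|f|^s=|f|^{\alpha}|f|^{\beta}$, solve the linear system forcing $\alpha p=l$, $\beta q=m$, $\alpha+\beta=s$ to get $p=\frac{m-l}{m-s}$, and apply H\"older once; the paper likewise treats $m=\infty$ as a separate endpoint case. Your explicit verification that $p,q\in(1,\infty)$ when $l<s<m$ is a small but welcome addition of rigor over the paper's presentation.
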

\begin{proof}
we split $|f|^s = |f|^{\alpha + \beta}$ and apply the H\"olders inequality with a suitable choice of $p$. In order to use $l$ and $m$ norms, we have the following conditions,
\begin{align}
\begin{bmatrix}
1 & 1 \\
p & 0  \\
0 & \frac{p}{p-1} 
\end{bmatrix}
\begin{bmatrix}
\alpha \\
\beta 
\end{bmatrix} 
= \begin{bmatrix}
s \\
l \\
m
\end{bmatrix}.
\end{align}
From $\alpha + \beta = s$, we get the following convex combination, $l\frac{1}{p} + m \frac{p-1}{p} = s$.
Thus solving for $p$, 
\begin{align}
p &= \frac{m-l}{m-s}, \\
q &= \frac{p}{p-1},  \\
&= \frac{m-l}{s-l}.
\end{align}

Now by applying the H\"olders inequality we get,
\begin{align}
(\norm{f}_s)^s  \leq (\norm{f}_l)^{l\frac{(m-s)}{m-l}} (\norm{f}_m)^{m\frac{s-l}{m-l}}. \label{lcb}
\end{align} 

When $m = \infty$, we can chose $p = 1$. This gives $\alpha = l$ and $\beta = s-l$. This gives,

\begin{align}
(\norm{f}_s)^s  \leq (\norm{f}_l)^{l} (\norm{f}_\infty)^{s-l}. 
\end{align}
\end{proof}

From the above theorem, the bound obtained in the previous section is given when $l = 1, m= \infty$ and $s = \frac{5}{4}$. 

\section{Applications}
\subsection{Bounding $\zeta(2k+1)$}
Riemann zeta function $\zeta(s) = \sum_{n=1}^{\infty} \frac{1}{n^s}$ has nice closed form expressions at even integers $s$ ( from \cite{2} Theorem 12.17), The bound in equation \eqref{lcb} can be used to bound the value of $\zeta(s)$ for odd $s$. 
Consider $f = \{ \frac{1}{ n^2} \}$, then $\norm{f}_1 = \frac{\pi^2}{6}$. 
Let us bound $\sum_{n=1}^{\infty} \frac{1}{n^3} = (\norm{f}_{3/2})^{3/2}$.
From equation \eqref{lcb}, let $l =1, m =2$, 
we get 
 \begin{align}
 \sum_{n=1}^{\infty} \frac{1}{n^3} \leq \left(\frac{\pi^2}{6} \right)^{\frac{1}{2}}\left(\frac{\pi^4}{90} \right)^{\frac{1}{2}} = \frac{\pi^3}{6 \sqrt{15}}.
 \end{align}
 In general $\zeta(2k+1) = (\norm{f}_{\frac{2k+1}{2}})^{\frac{2k+1}{2}}$, by using $k$ norm and $k+1$ norm we get,
 
 \begin{align}
 \zeta(2k+1) \leq \sqrt{\zeta(2k)\zeta(2k+2)}. 
 \end{align}
 The Table \ref{zetat} shows the approximate value of $\zeta(2k+1)$ and its bound. 
 
 \begin{table}
 	\begin{tabular}{|l|l|l|l|}
 		\hline 
 		$k$ &  $\zeta(k)$ & bound  $\sqrt{ \zeta(\frac{k-1}{2}) \zeta(\frac{k+1}{2})}$ & closed form of bound \\ \hline
 		3& 1.20205690315959  &  1.33429770234112 & $\frac{\pi^3}{6 \sqrt{15}}$ \\ \hline
 		5& 1.03692775514336	&	1.04933027814916 & $\frac{\pi^5}{45 \sqrt{42}}$ \\ \hline
 		7  & 1.00834927738192 &1.01068844458798  & $\frac{\pi^7}{945 \sqrt{10}}$\\ \hline
 		9  & 1.00200839282608  &1.00253478072475 &$\frac{\pi^9}{2835 \sqrt{110}} $ \\ \hline
 		11  & 1.00049418860411 &1.00062026085458 & $\frac{\pi^{11} \sqrt{691}}{5 \sqrt{273}}$ \\ \hline
 		\end{tabular}
 	\caption{Value of $\zeta(k)$, its upper bounds numerical value and the closed form expression.} \label{zetat}
 \end{table}
\subsection{Bounding binomial sums}
Binomial sums of the form $\sum_{k=0}^N \binom{N}{k} k^s$ is possible to evaluate recursively when $s$ is a non negative integer. However when $s > 1 $ is not an integer, the expression does not have the closed form. as an example, we have $\sum_{k=0}^N \binom{N}{k} k = N 2^{N-1}$ and
 $\sum_{k=0}^N \binom{N}{k}k^2 = 2^{N-2} (N+N^2)$.
 Thus for any $1 \leq s \leq 2$ we have
 \begin{align}
 \sum_{k=0}^N \binom{N}{k} k^s &\leq (N^{2-s} (N+N^2)^{s-1})  2^{(N-1)(2-s)+ (N-2)(s-1)}, \\
  \sum_{k=0}^N \binom{N}{k} k^s &\leq (N^{2-s} (N+N^2)^{s-1})  2^{N-s}. 
 \end{align} 
 
\subsection{Bounding integrals and special function}

Consider the integral $\int_{0}^{\pi/2} \sin(x)^{\frac{3}{2}}dx$, which has the numerical value $\beta(\frac{1}{2},\frac{5}{4}) = 0.87401918476404$, by using the \eqref{lcb} with $l=1$ and $m=2$, we get 
\begin{align}
\int_{0}^{\pi/2} \sin(x)^{\frac{3}{2}}dx &\leq \left(\int_{0}^{\pi/2} \sin(x)dx \right)^{\frac{1}{2}}  \left(\int_{0}^{\pi/2} \sin(x)^{2}dx \right)^{\frac{1}{2}}, \\
& =\sqrt{\frac{\pi}{2}} = 0.88622692545276.
\end{align}  

The Gamma function has the integral form 
$\Gamma(y+1) = \int_0^\infty x^{y} e^{-x} dx$. 
Given $ y >1$, and $l$ and $m$ are the nearest integers such that $l<y<m$ and $m=l+1$.
Further we chose the measure space such that $\norm{x}_y = \Gamma(y+1)$.
Using \eqref{lcb} we have,
\begin{align*}
\Gamma(y+1) \leq \Gamma(l+1)^{l+1-y} \Gamma(l+2)^{y-l}.
\end{align*} 
Further we have $\Gamma(l+1) =  l!$. This gives
\begin{align}
\Gamma(y+1) \leq (l!) (l+1)^{y-l}.
\end{align}

We have the expression for $\beta$ function as
\begin{align}
\beta(x,y) = \frac{\Gamma(x) \Gamma(y)}{\Gamma(x+y)}.
\end{align}
With $\floor{x} = l \geq 1$ and $\floor{y} = m \geq 1$, and $\floor{x+y} = k$  we get, 
\begin{align}
\beta(x+1,y+1) \leq \frac{(l! m!) (l+1)^{x-l} (m+1)^{y-m}}{(l+m)!}.  
\end{align}

\textbf{Acknowledgement :} Author thanks Dr. Dinu Teodorescu for useful discussions.


\begin{thebibliography}{99} % Sort references alphabetically per first author's surname.
\bibitem{1}Royden, Halsey Lawrence, and Patrick Fitzpatrick.(1998) \textit{ Real analysis}. Vol. 32. New York: Macmillan. 
\bibitem{2}Apostol, Tom M. (1976) \textit{Introduction to analytic number theory}, Springer-Verlag.  
\bibitem{3} S. Abramovich, B. Mond, J. E. Pečarić,\textit{ Sharpening H\"older’s inequality}, J. Math. Anal. Appl.
196(1) (1995), 1131–1134.
\bibitem{7} J. M. Aldaz, \textit{A stability version of H\"older’s inequality}, J. Math. Anal. Appl. 343(2) (2008),
842–852.
\bibitem{6} C. BorellInverse, \textit{H\"older inequalities in one and several dimensions}, J. Math. Anal. Appl.
41(2)(1973), 300–312.
\bibitem{4} Y. Kim, X. Yang, \textit{Generalizations and refinements of H\"older’s inequality}, Appl. Math. Lett.
25(7) (2012), 1094–1097.
\bibitem{5} E. G. Kwon, E. K. Bae, \textit{On a continuous form of H\"older inequality}, J. Math. Anal. Appl.
343(1) (2008), 585–592.



\end{thebibliography}
\end{document}